\newcommand{\R}{\mathbb{R}}
\renewcommand{\P}{\mathbb{P}}
\newcommand{\G}{\mathbb{G}}
\renewcommand{\iff}{\Leftrightarrow}
\theoremstyle{plain}
\newtheorem{theorem}{Theorem}
\newtheorem{lemma}[theorem]{Lemma}
\newtheorem{proposition}[theorem]{Proposition}
\theoremstyle{definition}
\newtheorem{definition}[theorem]{Definition}
\newtheorem{example}[theorem]{Example}
\theoremstyle{remark}
\begin{document}
	\title{Minimal Embedding Dimensions of Connected Neural Codes}
	\author{Raffaella Mulas}
	\address{Max Planck Institute for Mathematics in the Sciences, D-04103 Leipzig, Germany}
	\email{raffaella.mulas@mis.mpg.de}
	\author{Ngoc M Tran}
	\address{Department of Mathematics, The University of Texas at Austin, TX 78751, USA and the Hausdorff Center for Mathematics, Bonn University, D-53115 Bonn, Germany }
	\email{ntran@math.utexas.edu}
	
	\maketitle
	\begin{abstract}
    Receptive field code is a recently proposed deterministic model of neural firing. The main question is to characterize the set of realizable codes, and their minimal embedding dimensions with respect to a given family of receptive fields. Here we answer both of these questions when the receptive fields are connected. In particular, we show that all connected codes are realizable in dimension at most three. To our knowledge, this is the first family of receptive field codes for which both the exact characterization and minimal embedding dimension are known. 
	\end{abstract}
	
	\section{Introduction}
	The \textit{receptive field code} is a deterministic model of neural firing defined by Curto, Itskov, Veliz-Cuba and Youngs \cite{ring}. It consists of $n \in \mathbb{N}$ neurons, each neuron $i \in [n] = \{1, 2, \ldots, n\}$ has a receptive field $U_i \subseteq \R^d$. Given a stimulus $x \in \R^d$, the neurons generate a codeword $\sigma(x) \subseteq 2^{[n]}$ via
	\begin{equation}\label{eqn:code.deterministic}
	i \in \sigma(x) \iff x \in U_i.     
	\end{equation}
	A receptive field code $\mathcal{C}(\mathcal{U}) \subseteq 2^{[n]}$ is the set of all possible codewords generated from the collection of receptive fields $\mathcal{U} = (U_1, \ldots, U_n)$. For convenience, we will assume that every receptive field code includes the empty set, i.e. $\emptyset\in\mathcal{C}$, which is equivalent to assume that $\bigcup_{i\in[n]}U_i\subsetneq\mathbb{R}^d$. The results in this paper still hold if one assumes $\emptyset\notin\mathcal{C}$.
	\begin{figure}[h]
		\begin{center}
			\includegraphics[width=7cm]{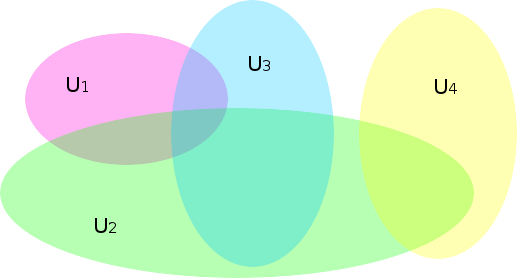}
		\end{center}
		\caption{$4$ receptive fields generating the receptive field code $\mathcal{C}(\mathcal{U})=\{\emptyset,1,2,3,4,12,13,23,24,123\}$.}
	\end{figure}\newline
	Given a code $\mathcal{C} \subseteq 2^{[n]}$ with $\emptyset\in\mathcal{C}$ and a family $\mathcal{F}_d$ of sets in $\R^d$, we say that $\mathcal{C}$ is realizable in dimension $d$ if $\mathcal{C} = \mathcal{C}(\mathcal{U})$ for some $\mathcal{U} \subseteq \mathcal{F}_d$. Call the smallest such $d$ the minimal embedding dimension of the code $\mathcal{C}$, denoted $d^\ast(\mathcal{C},\mathcal{F})$. The \emph{minimal embedding problem} is to find $d^\ast(\mathcal{C},\mathcal{F})$ for given a code $\mathcal{C} \subseteq 2^{[n]}$ and a family $\mathcal{F} = (\mathcal{F}_d, d \geq 1)$ of sets in $\R^d$. 
	
	This paper focuses on \emph{connected codes}. These are codes realizable by connected sets in $\R^d$, for some $d \in \mathbb{N}$, which are either all closed or all open. Asking for these sets to be either all closed or all open makes sense in neuroscience, where receptive fields are intrinsically noisy \cite{Giusti-Itskov}, and it also makes the problem non-trivial since all codes can be realized with connected sets if there are no other constraints. Furthermore, a code is realizable with closed connected sets if and only if it is realizable with open connected sets. In fact, if $\mathcal{C} = \mathcal{C}(\mathcal{U})$ where $\mathcal{U}=(U_1, \ldots, U_n)$ is given by closed connected sets, then $\mathcal{C} = \mathcal{C}(\mathcal{U'})$ where $\mathcal{U'}:=(U'_1, \ldots, U'_n)$ and $U'_i$ is a sufficiently small open connected set containing $U_i$. Vice versa, if $\mathcal{C} = \mathcal{C}(\mathcal{U})$ where $\mathcal{U}=(U_1, \ldots, U_n)$ is given by open connected sets, then $\mathcal{C} = \mathcal{C}(\mathcal{U''})$ where $\mathcal{U''}:=(U''_1, \ldots, U''_n)$ and $U''_i$ is a sufficiently big closed connected set contained in $U_i$.
	
	Our main results completely characterize realizability and minimal embedding dimensions of connected codes. In particular, it is easy to check if a code is connected, and if it is, then the minimal embedding dimension is at most $3$. The graph of a family of connected sets $\mathcal{U}$ is given in Definition \ref{defn:graph.of.u}. 
	
	An alternative characterization of connected codes can be found in \cite[Theorem 4.1]{Jeffs}.
	\begin{proposition}[Realizability of connected codes]\label{prop:main}
		A code $\mathcal{C}$ is connected if and only if for each $\sigma,\tau\in\mathcal{C}$ and for each $i\in\sigma\cap\tau$, there exists a sequence of distinct codewords $\nu_1,\dots,\nu_m\in\mathcal{C}$ such that:
		\begin{itemize}
			\item $\sigma=\nu_1$
			\item either $\nu_j\subseteq\nu_{j+1}$ or $\nu_{j+1}\subseteq\nu_{j}$, for every $j\in[m-1]$
			\item $\nu_m=\tau$
			\item $i\in\nu_j$ for each $j\in[m]$.
		\end{itemize}
	\end{proposition}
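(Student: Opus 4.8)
The plan is to restate the condition in graph-theoretic terms and then prove the two implications separately, the forward one via a closed realization and the backward one via an explicit construction. For $i\in[n]$ set $\mathcal C_i=\{\sigma\in\mathcal C\colon i\in\sigma\}$, and let $G_i$ be the graph on vertex set $\mathcal C_i$ with $\sigma\sim\tau$ iff $\sigma\subsetneq\tau$ or $\tau\subsetneq\sigma$. Since any walk in a graph can be shortened to a path through distinct vertices, the four displayed bullets say precisely that $\sigma$ and $\tau$ lie in the same connected component of $G_i$; as this is demanded for all $\sigma,\tau\in\mathcal C_i$, the displayed condition is equivalent to: \emph{$G_i$ is connected for every $i$} (we may harmlessly assume every $i$ lies in some codeword, since otherwise both the condition and the receptive field $U_i$ are empty). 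So it suffices to show that $\mathcal C$ is connected iff every $G_i$ is connected.

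($\Rightarrow$) Suppose $\mathcal C=\mathcal C(\mathcal U)$ with all $U_j$ closed and connected, which is legitimate by the remarks in the introduction. For $\sigma\in\mathcal C$ put $A_\sigma=\{x\colon\sigma(x)=\sigma\}$, a nonempty set. If $j\in\sigma$ then $A_\sigma\subseteq U_j$, and $U_j$ is closed, so $\overline{A_\sigma}\subseteq U_j$; hence every $z\in\overline{A_\sigma}$ satisfies $\sigma\subseteq\sigma(z)$, and of course $\sigma(z)\in\mathcal C$. Consequently, if $z\in\overline{A_\sigma}\cap\overline{A_\tau}$ and $i\in\sigma\cap\tau$, then $\rho:=\sigma(z)$ lies in $\mathcal C_i$ with $\sigma\subseteq\rho$ and $\tau\subseteq\rho$, so $\sigma,\rho,\tau$ is a walk in $G_i$; thus $\overline{A_\sigma}\cap\overline{A_\tau}=\emptyset$ whenever $\sigma,\tau\in\mathcal C_i$ lie in distinct components of $G_i$. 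Now fix $i$ and let $S_1,\dots,S_r$ be the vertex sets of the components of $G_i$. The sets $V_k:=\bigcup_{\sigma\in S_k}\overline{A_\sigma}$ are nonempty, closed, pairwise disjoint by the previous sentence, and $U_i=\bigcup_{\sigma\in\mathcal C_i}A_\sigma=\bigcup_k V_k$ (using $\overline{A_\sigma}\subseteq U_i$ for $i\in\sigma$). Since $U_i$ is connected, $r=1$, i.e.\ $G_i$ is connected.

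($\Leftarrow$) Suppose every $G_i$ is connected. Let $H$ be the finite graph whose vertices are the nonempty codewords and whose edges are the pairs $\{\sigma,\tau\}$ with $\sigma\subsetneq\tau$, and embed $H$ in $\R^3$ (any finite graph embeds there): take pairwise disjoint small closed balls $R_\sigma$, one per vertex, and for each edge a closed arc $B_{\sigma\tau}$ from $\partial R_\sigma$ to $\partial R_\tau$, the arcs pairwise disjoint and meeting the balls only at their own, pairwise distinct, endpoints. Set, for $j\in[n]$,
\[
U_j:=\bigcup_{\sigma\in\mathcal C,\,j\in\sigma}R_\sigma\;\cup\;\bigcup_{\substack{\sigma\subsetneq\tau\text{ in }\mathcal C\\ j\in\sigma}}B_{\sigma\tau},
\]
a closed set. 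One checks $\mathcal C(\mathcal U)=\mathcal C$: interior points of $R_\sigma$ and of $B_{\sigma\tau}$ have codes $\sigma$ and $\sigma$; the junction $\partial R_\sigma\cap B_{\sigma\tau}$ has code $\sigma$ and $\partial R_\tau\cap B_{\sigma\tau}$ has code $\tau$; every remaining point lies outside all $U_j$ and has code $\emptyset\in\mathcal C$. So every realized code lies in $\mathcal C$, while $R_\sigma$ realizes $\sigma$ and $\emptyset$ is realized outside the compact set $\bigcup_j U_j$. Finally each $U_j$ is connected: the balls $R_\sigma$ with $\sigma\in\mathcal C_j$ are connected, and $R_\sigma,R_\tau$ are joined inside $U_j$ by $B_{\sigma\tau}$ exactly when $\sigma,\tau\in\mathcal C_j$ are comparable, so the pieces of $U_j$ are glued according to the pattern $G_j$; since $G_j$ is connected, so is $U_j$.

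I expect the fussiest part to be the case analysis in ($\Leftarrow$) verifying that the gluing creates no codeword outside $\mathcal C$, especially at the ball--arc junctions; conceptually, though, the one substantive point is the observation in ($\Rightarrow$) that in a closed realization the closures $\overline{A_\sigma}$ and $\overline{A_\tau}$ can meet only when $\sigma$ and $\tau$ have a common extension lying in $\mathcal C$.
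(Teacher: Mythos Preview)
Your argument is correct and follows the same overall strategy as the paper: rephrase the bulleted condition as connectivity of the comparability graphs $G_i$, then treat the two implications via the atom decomposition and an explicit embedding in $\R^3$. Your backward direction is precisely the ball-and-tube construction that the paper carries out separately in its proof that $d^\ast\le 3$; the paper's own proof of this proposition only says each $U_i$ ``is allowed to be connected'' and leaves the realization implicit. Your forward direction is actually a bit sharper than the paper's: you observe that in a closed realization any point of $\overline{A_\sigma}\cap\overline{A_\tau}$ has a codeword $\rho\in\mathcal C$ containing both $\sigma$ and $\tau$, yielding a length-two walk in $G_i$ and hence a clean closed separation of $U_i$ indexed by the components of $G_i$. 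The paper instead asserts that adjacent atom components must have \emph{comparable} indices and routes the argument through its realization graph $\G(\mathcal U)$; that reaches the same conclusion but is stated more tersely and requires tracking connected components of individual atoms.
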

	\begin{theorem}[Minimal embedding of connected codes]\label{thm:main}
		Suppose $\mathcal{C}$ is a connected code. Let $d^\ast(\mathcal{C})$ denote its minimal embedding dimension with respect to the family of connected sets. 
		\begin{itemize}      
			\item $d^\ast(\mathcal{C}) = 1$ if and only if the sensor graph of $\mathcal{C}$ is bipartite \cite{rosen-zhang}. 
			\item Else, $d^\ast(\mathcal{C}) = 2$ if and only if there exists a realization $\mathcal{C}(\mathcal{U}) = \mathcal{C}$ by connected sets $\mathcal{U}$ in dimension $3$ such that the graph of $\mathcal{U}$ is planar.
			\item Else, $d^\ast(\mathcal{C}) = 3$. 
		\end{itemize}
	\end{theorem}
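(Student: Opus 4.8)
The plan is to reduce the trichotomy to three statements. (A) Every connected code is realizable in dimension $3$, so $d^\ast(\mathcal{C})\le 3$ always. (B) $d^\ast(\mathcal{C})=1$ if and only if the sensor graph of $\mathcal{C}$ is bipartite; this is \cite{rosen-zhang}. (C) $\mathcal{C}$ is realizable in dimension $2$ if and only if it has a realization in dimension $3$ whose graph (Definition \ref{defn:graph.of.u}) is planar. Granting these: the first bullet is (B); if $d^\ast(\mathcal{C})\neq 1$ then $d^\ast(\mathcal{C})\in\{2,3\}$ by (A), and it equals $2$ exactly under the stated condition by (C); the third bullet is what remains. So all the content is in (A) and (C), and both come from one ``thicken a graph into tubes'' construction.

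For (A) I would first extract a combinatorial skeleton from Proposition \ref{prop:main}. For $i\in[n]$, let $H_i$ be the graph on $\{\nu\in\mathcal{C}:\ i\in\nu\}$ with $\sigma\sim\tau$ whenever $\sigma$ and $\tau$ are comparable; Proposition \ref{prop:main} says each $H_i$ is connected. Fix a set $E$ of comparable pairs of codewords --- all of them, or a union of spanning trees of the $H_i$ --- such that for every $i$ the pairs of $E$ contained in $\{\nu\ni i\}$ still connect that set. Now realize $(\mathcal{C},E)$ geometrically: choose points $p_\sigma\in\R^3$ ($\sigma\in\mathcal{C}$) in general position, let $B_\sigma$ be a tiny ball about $p_\sigma$, and for each $\{\sigma,\tau\}\in E$ with $\sigma\subsetneq\tau$ join $B_\sigma$ to $B_\tau$ by a thin tube carrying a diaphragm, so that on the $\sigma$-side of the diaphragm the codeword is $\sigma$ and on the $\tau$-side it is $\tau$ (the diaphragm being the common boundary wall of $U_j$ for all $j\in\tau\setminus\sigma$, while each $U_i$ with $i\in\sigma$ runs through it). Define $U_i$ to be the union of the balls $B_\sigma$ with $i\in\sigma$, the full tubes $T_{\sigma\tau}$ with $i\in\sigma$, and the $\tau$-halves of the tubes $T_{\sigma\tau}$ with $i\in\tau\setminus\sigma$. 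Since one-dimensional curves are in general position in $\R^3$, the tubes can be taken pairwise disjoint away from shared endpoints, so every point of $\R^3$ lies in a single $B_\sigma$, a single tube-half, or outside everything, and hence has codeword $\sigma$, or $\sigma$ or $\tau$, or $\emptyset$ --- always a codeword of $\mathcal{C}$; and each $\sigma\in\mathcal{C}$ occurs at $B_\sigma$, so $\mathcal{C}(\mathcal{U})=\mathcal{C}$. Moreover each $U_i$ is connected: the balls $B_\sigma$ with $i\in\sigma$ are tied together through the $E$-tubes lying inside $\{\nu\ni i\}$ (each such tube is entirely in $U_i$, as $i$ lies in the smaller codeword), and every tube-half appearing in $U_i$ hangs off one of these balls. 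This gives $d^\ast(\mathcal{C})\le3$, and by construction the graph of $\mathcal{U}$ is $(\mathcal{C},E)$.

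For (C), the forward direction is nearly formal: if $\mathcal{C}(\mathcal{V})=\mathcal{C}$ with $\mathcal{V}\subseteq\R^2$, then --- after a harmless approximation making the arrangement tame --- the graph of a configuration of connected sets in the plane is planar, and placing $\mathcal{V}$ inside $\R^2\times\{0\}\subseteq\R^3$ exhibits it as a realization of $\mathcal{C}$ by connected sets in dimension $3$ with planar graph. For the backward direction I would rerun the construction of (A) in the plane: from a dimension-$3$ realization $\mathcal{U}$ with planar graph, read off the skeleton $E$ given by the edges of the graph of $\mathcal{U}$ (connectivity of each $U_i$ forces the pairs of $E$ inside $\{\nu\ni i\}$ to connect it, as in Proposition \ref{prop:main}, and for each such adjacent pair $\sigma\cap\tau\in\mathcal{C}$, so the diaphragm step is legal, using open sets sharing a wall in the incomparable case). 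Because $(\mathcal{C},E)$ is planar it can be drawn in $\R^2$ without crossings; thickening exactly as before yields $\mathcal{U}'\subseteq\R^2$ with $\mathcal{C}(\mathcal{U}')=\mathcal{C}$. Together with (A), (B) and the contrapositives of (B) and (C), this settles all three cases.

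The main obstacle is making the thickening rigorous: one must check that no spurious codeword ever appears --- the diaphragms plus general-position routing are exactly what keep a codeword such as $\sigma\cup\{j\}\notin\mathcal{C}$ from surfacing along a tube --- that each $U_i$ is genuinely connected and may be taken all closed (equivalently all open, by the remark in the introduction), and that the graph of $\mathcal{U}$ produced by the construction really is the chosen skeleton. The second delicate point is the interface with Definition \ref{defn:graph.of.u}: on the forward side of (C) one needs that an arbitrary connected-set realization in $\R^2$ can be approximated by one with a bona fide planar graph without changing the code, and on the backward side that a ``planar graph of a realization in dimension $3$'' does supply an admissible planar skeleton --- i.e. that the adjacencies recorded by the graph of $\mathcal{U}$ already encode the comparability chains of Proposition \ref{prop:main}.
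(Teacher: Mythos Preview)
Your proposal is correct and follows essentially the same route as the paper: the identical three-way split into (A) $d^\ast\le 3$, (B) the $d^\ast=1$ case cited from \cite{rosen-zhang}, and (C) the planarity criterion for $d^\ast=2$, with (A) proved by a balls-and-tubes construction and the backward direction of (C) by thickening a planar drawing of the skeleton. The paper's presentation is marginally simpler in two respects---it assigns the entire tube $T_{\sigma,\tau}$ (for $\sigma\subset\tau$) to the atom $A_\sigma$ rather than splitting it with a diaphragm, and it packages the backward direction of (C) as a standalone lemma (Lemma~\ref{lem:g.in.dprime}) that transfers any realization whose graph embeds in $\R^{d'}$ to a connected realization in $\R^{d'}$---and your worry about incomparable adjacencies in $\G(\mathcal{U})$ is unnecessary, since for all-open or all-closed $\mathcal{U}$ adjacent atoms are automatically comparable (as noted in the proof of Proposition~\ref{prop:main}).
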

	
	In dimension 1, connected codes equal the convex codes studied by Rosen and Zhang \cite{rosen-zhang}. The characterization for $d^\ast =1$ via the sensor graph in Theorem~\ref{thm:main} belongs to \cite{rosen-zhang}, and is included for completeness. We do not define the sensor graph of a code here, but note that it is an intrinsic property of the code, independent of any realization $\mathcal{U}$. 
	
	The minimal embedding dimension $d^\ast(\cdot, \mathcal{F})$ and the family $\mathcal{F}$ of receptive fields form a trade-off in measuring the complexity of the signal encoded by the neurons, and is thus of particular interest in receptive field coding. There has been a number of work on criterion for realizability and bounds for $d^\ast(\cdot,\mathcal{F})$ when the set $\mathcal{F}$ consists of (open or closed) convex sets \cite{Curto-Gross,localobstructions,Giusti-Itskov,rosen-zhang,Lienkaemper}. 
	However, complete characterization and the exact minimal embedding dimension of convex codes remain a problem. Giusti and Itskov \cite{localobstructions} found necessary conditions for a code to be realizable with open convex sets, and proved lower bounds on the embedding dimensions of such codes. In \cite{Giusti-Itskov}, Cruz, Giusti, Itskov and Kronholm proved that there exists a family of codes, called max-intersection-complete codes, that are both open convex and closed convex, and they gave an upper bound for their embedding dimension. To the best of our knowledge, connected codes form the first family of receptive field codes for which an intrinsic characterization and the exact embedding dimension is known. Furthermore, our proof gives explicit constructions for the code realization in each dimension.
	
	Receptive field codes are closely related to Euler diagrams, which found applications in information systems, statistics and logic \cite{2DEuler,Rodgers}. Since their main applications are in visualization, the literature on Euler diagrams focus exclusively on $2$ and $3$ dimensions. Translated to our setup, an Euler diagram in $\R^3$ is a collection $\mathcal{U} = (U_1, \ldots, U_n)$ of closed, orientable surfaces embedded in $\R^3$. An Euler diagram in $\R^2$ is a similar collection of closed curves embedded in $\R^2$. A diagram description is a code $\mathcal{C}$ such that $\emptyset \in \mathcal{C}$. The description of an Euler diagram $\mathcal{U}$ is the code $\mathcal{C}(\mathcal{U}^\circ)$, where $\mathcal{U}^\circ:=(U^\circ_1, \ldots, U^\circ_n)$ consists of the relative interior of the sets $U_i$'s. The main problem in this literature is realizability: given a code $\mathcal{C}$, is there an Euler diagram $\mathcal{U}$ such that $\mathcal{C}=\mathcal{C}(\mathcal{U}^\circ)$? 
	Every code $\mathcal{C}$ can be realized by an Euler diagram in dimension $2$ \cite{Rodgers}, and by an Euler diagram in dimension $3$ with connected sets $U_i$'s \cite{Drawability}. Note the crucial difference to receptive field codes: in an Euler diagram, codewords are generated by intersection of the relative interior of the $U_i$'s. In particular, all codes $\mathcal{C}$ which fail the condition of Proposition \ref{prop:main} satisfy $\mathcal{C} = \mathcal{C}(\mathcal{U}^\circ)$ for some tuple of closed connected sets $\mathcal{U}$ in $\R^3$, but $\mathcal{C} \neq \mathcal{C}(\mathcal{U})$ for any tuple of closed connected sets in any dimension.
	
	In practice, neural firing is stochastic. One could incorporate noise to the receptive field code by replacing the deterministic equation (\ref{eqn:code.deterministic}) with some parametrization of the firing probability $\P(i \in \sigma(x)|x \in U_i)$. To be well-defined, this model needs further specifications, such as the distribution of the signal on $\R^d$. In this formulation, the minimal embedding dimension is a difficult and poorly formed statistical problem. Furthermore, it is clear that the minimal embedding dimension depends heavily on such details. However, underlying such models the assumption that there is a set of true receptive fields $\mathcal{U}$. Knowing the minimal embedding dimension for the deterministic model ensures that the neuroscientist do not have excessively many parameters, which can lead to ill-defined estimation problems. From this view, Theorem \ref{thm:main} states that if the true receptive fields are only required to be connected, one can assume that they are in dimension $3$.
	
	Apart from connected and convex sets, there are many biologically relevant models for receptive fields. Finding the minimal embedding dimension of receptive field codes realizable by any given family is an interesting and challenging problem. To be concrete, we propose another simple family motivated by observations from neuroscience. In experiments, one often encounter a group of neurons which all have the same receptive field up to translation, such as the retinal ganglion cells, head direction cells \cite{Dayan-Abbott}, place cells and grid cells  \cite{OKeefe1,Moser}. This corresponds to the case where $\mathcal{F}_d$ consists of all possible translations of some set $S \subset \R^d$. We call this the \emph{shift} code. Thus, a concrete open problem is: which shift codes can be realized, and what would be their minimal embedding dimensions? 
	
	\section{Proof of the main results}\label{sec:main}
	
	\begin{definition}
		Let $\mathcal{C}$ be a code on $n$ neurons. We say that $\mathcal{C}$ is realizable by an atom sequence $\mathcal{A} = (A_\sigma \subseteq \R^d, \sigma \subseteq [n])$ if $A_\sigma \neq \emptyset \iff \sigma \in \mathcal{C}$. In this case, write $\mathcal{C} = \mathcal{C}(\mathcal{A})$. 
	\end{definition}
	
	\begin{lemma}\label{lem:u.from.a}
		Let $\mathcal{C}$ be a code on $n$ neurons. Then $\mathcal{C} = \mathcal{C}(\mathcal{A})$ if and only if $\mathcal{C} = \mathcal{C}(\mathcal{U})$, where
		\begin{equation}\label{eqn:u.from.a}
		U_i = \bigcup_{i\in\sigma} A_\sigma, 
		\end{equation} 
		or equivalently,
		\begin{equation}\label{eqn:a.from.u}
		A_\sigma=\Biggl(\bigcap_{i\in\sigma}U_i\Biggr)\backslash\bigcup_{j\notin\sigma}U_j,
		\end{equation}with the convention that $A_\emptyset=\mathbb{R}^d\backslash\bigcup_{i\in[n]}U_i$.\newline
		In other words, $\mathcal{A}$ and $\mathcal{U}$ determine each other. 
	\end{lemma}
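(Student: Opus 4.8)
The plan is to verify the two equivalent formulas and then the biconditional. First I would establish that the two displayed formulas \eqref{eqn:u.from.a} and \eqref{eqn:a.from.u} are inverse to each other, i.e.\ that a tuple $\mathcal{U}$ and an atom sequence $\mathcal{A}$ satisfy \eqref{eqn:u.from.a} if and only if they satisfy \eqref{eqn:a.from.u}. This is a set-theoretic identity that does not use the code at all. In one direction, assume \eqref{eqn:u.from.a} and compute $\bigl(\bigcap_{i\in\sigma}U_i\bigr)\setminus\bigcup_{j\notin\sigma}U_j$: a point $x$ lies in this set exactly when $x\in U_i$ for all $i\in\sigma$ and $x\notin U_j$ for all $j\notin\sigma$; by \eqref{eqn:u.from.a} this says precisely that the unique index set $\rho$ with $x\in A_\rho$ contains $\sigma$ and is contained in $\sigma$, hence $\rho=\sigma$, so the set equals $A_\sigma$. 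Here I would want to state explicitly the standing assumption that makes $\mathcal{A}$ a genuine \emph{partition} of $\R^d$ into the atoms $A_\rho$, $\rho\subseteq[n]$ (including $A_\emptyset=\R^d\setminus\bigcup_i U_i$); once that is in place the "unique index set'' language is justified. The converse direction, assuming \eqref{eqn:a.from.u} and deriving $U_i=\bigcup_{i\in\sigma}A_\sigma$, is the same computation read backwards: $x\in U_i$ iff the atom containing $x$ has index set $\sigma\ni i$.

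Next I would turn to the equivalence $\mathcal{C}=\mathcal{C}(\mathcal{A})\iff\mathcal{C}=\mathcal{C}(\mathcal{U})$ under these formulas. Recall $\mathcal{C}=\mathcal{C}(\mathcal{A})$ means $A_\sigma\neq\emptyset\iff\sigma\in\mathcal{C}$, while $\mathcal{C}=\mathcal{C}(\mathcal{U})$ means that $\sigma$ is the codeword $\sigma(x)$ of some stimulus $x$ iff $\sigma\in\mathcal{C}$, where by \eqref{eqn:code.deterministic} $\sigma(x)=\{i: x\in U_i\}$. The key observation is that, with $\mathcal{A}$ and $\mathcal{U}$ linked by \eqref{eqn:u.from.a}--\eqref{eqn:a.from.u}, for every $x\in\R^d$ the codeword $\sigma(x)$ equals the unique $\sigma$ with $x\in A_\sigma$; equivalently, $A_\sigma=\{x: \sigma(x)=\sigma\}$. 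This is immediate from \eqref{eqn:a.from.u}: $x\in A_\sigma$ says $x\in U_i$ for $i\in\sigma$ and $x\notin U_j$ for $j\notin\sigma$, which is exactly $\sigma(x)=\sigma$. Consequently $\sigma$ is realized as a codeword of some $x$ iff $A_\sigma\neq\emptyset$, and therefore $\mathcal{C}(\mathcal{U})=\{\sigma: A_\sigma\neq\emptyset\}=\mathcal{C}(\mathcal{A})$ as sets of codewords. In particular one equals $\mathcal{C}$ iff the other does, which is the claim; the final sentence about $\mathcal{A}$ and $\mathcal{U}$ determining each other is just the fact that \eqref{eqn:u.from.a} and \eqref{eqn:a.from.u} are explicit mutually inverse maps.

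I do not expect a serious obstacle here; the statement is essentially bookkeeping about the atom decomposition. The one point that needs care is the implicit hypothesis on the atom sequence $\mathcal{A}$: for the formulas to make sense and be inverse to each other, the $A_\sigma$ must be pairwise disjoint with union $\R^d$ and must be indexed consistently so that $x\in A_\sigma$ forces $i\in\sigma$ whenever $x\in U_i$ — in other words, $\mathcal{A}$ should be (or be taken to be) the atom decomposition induced by \emph{some} tuple of sets. If instead one allows an arbitrary family $(A_\sigma)$ with only the nonemptiness pattern prescribed, then one should first replace it by the decomposition $A'_\sigma=\{x:\sigma(x)=\sigma\}$ of the $\mathcal{U}$ built via \eqref{eqn:u.from.a}, note $A'_\sigma\neq\emptyset\iff A_\sigma\neq\emptyset$, and run the argument with $\mathcal{A}'$. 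Making this normalization explicit is the only subtlety; everything else is the two short set-membership computations sketched above.
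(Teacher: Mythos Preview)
The paper does not supply a proof of this lemma; it is stated and immediately used. Your proposal is correct and is exactly the routine verification one would expect: unwind the definition $\sigma(x)=\{i:x\in U_i\}$ to see that $A_\sigma=\{x:\sigma(x)=\sigma\}$ under \eqref{eqn:a.from.u}, so that $\mathcal{C}(\mathcal{U})=\{\sigma:A_\sigma\neq\emptyset\}=\mathcal{C}(\mathcal{A})$.

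Your closing caveat is the most valuable part of the write-up. The paper's Definition of ``atom sequence'' imposes no disjointness condition on the $A_\sigma$, yet the inverse relationship between \eqref{eqn:u.from.a} and \eqref{eqn:a.from.u} genuinely requires the $A_\sigma$ to partition $\R^d$. For an arbitrary family $(A_\sigma)$, defining $\mathcal{U}$ by \eqref{eqn:u.from.a} and then new atoms by \eqref{eqn:a.from.u} need not return the original family, and in fact $\mathcal{C}(\mathcal{U})$ can differ from $\mathcal{C}(\mathcal{A})$: take $n=2$, $A_1=A_2=\{0\}$, $A_{12}=\emptyset$; then $U_1=U_2=\{0\}$ and $12\in\mathcal{C}(\mathcal{U})\setminus\mathcal{C}(\mathcal{A})$. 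Your proposed fix---normalize an arbitrary $\mathcal{A}$ to the induced partition $A'_\sigma=\{x:\sigma(x)=\sigma\}$ before invoking the lemma---is the right way to make the statement literally true, and it is worth saying explicitly since the paper does not.
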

	
	\begin{lemma}\label{lem:always.realizable}
		Let $\mathcal{C}$ be a code on $n$ neurons. For any $d \geq 1$, $\mathcal{C} = \mathcal{C}(\mathcal{U})$ for some sequence of sets $\mathcal{U}$ in $\R^d$.
	\end{lemma}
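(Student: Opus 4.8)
The plan is to prove Lemma~\ref{lem:always.realizable} by reducing it to the atom-sequence formulation via Lemma~\ref{lem:u.from.a}. That is, it suffices to exhibit, for any code $\mathcal{C}$ on $n$ neurons and any $d \geq 1$, an atom sequence $\mathcal{A} = (A_\sigma \subseteq \R^d, \sigma \subseteq [n])$ with $A_\sigma \neq \emptyset \iff \sigma \in \mathcal{C}$; then $\mathcal{U}$ defined by \eqref{eqn:u.from.a} realizes $\mathcal{C}$. So the entire content of the lemma becomes: can we place nonempty sets in $\R^d$ indexed by the codewords of $\mathcal{C}$, and empty sets otherwise, in such a way that they are pairwise disjoint? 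Since $\R^d$ for $d \geq 1$ is an infinite set, this is immediate: pick $|\mathcal{C}|$ distinct points $p_\sigma \in \R^d$, one for each $\sigma \in \mathcal{C}$, and set $A_\sigma = \{p_\sigma\}$ for $\sigma \in \mathcal{C}$ and $A_\sigma = \emptyset$ for $\sigma \notin \mathcal{C}$.

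The one subtlety I would address carefully is the convention $A_\emptyset = \R^d \setminus \bigcup_{i \in [n]} U_i$ baked into Lemma~\ref{lem:u.from.a}: we are not free to choose $A_\emptyset$ arbitrarily, since it is determined as the complement of the union of the $U_i$'s. Under the standing assumption that $\emptyset \in \mathcal{C}$, we need this complement to be nonempty. With the point construction above, $\bigcup_{i \in [n]} U_i = \bigcup_{i \in [n]} \bigcup_{i \in \sigma} \{p_\sigma\}$ is a finite set of points, so its complement in $\R^d$ is certainly nonempty (indeed cofinite), and $A_\emptyset \neq \emptyset$ as required. For $\sigma \neq \emptyset$ with $\sigma \in \mathcal{C}$, formula \eqref{eqn:a.from.u} recovers exactly $\{p_\sigma\}$ because the $p_\tau$ are distinct, so $p_\sigma \in U_i$ iff $i \in \sigma$; and for $\sigma \notin \mathcal{C}$, $\sigma \neq \emptyset$, the set $\bigl(\bigcap_{i \in \sigma} U_i\bigr) \setminus \bigcup_{j \notin \sigma} U_j$ is empty since no $p_\tau$ has the property that $i \in \tau$ for all $i \in \sigma$ and $j \notin \tau$ for all $j \notin \sigma$ unless $\tau = \sigma$. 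Hence $\mathcal{C}(\mathcal{A}) = \mathcal{C}$, and by Lemma~\ref{lem:u.from.a}, $\mathcal{C}(\mathcal{U}) = \mathcal{C}$.

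There is essentially no obstacle here; the lemma is a warm-up whose only purpose is to record that realizability with arbitrary sets is trivial, thereby isolating the difficulty in the \emph{constrained} (connected, closed-or-open) realization problem that the rest of the paper treats. If one wanted the realizing sets to already have some mild regularity (e.g. open), one could just as easily take $A_\sigma$ to be a sufficiently small open ball around $p_\sigma$, with radii small enough that the balls are pairwise disjoint and their union does not cover $\R^d$; the same verification goes through verbatim. I would present the point-mass version as the cleanest, with a one-line remark that open balls work equally well.
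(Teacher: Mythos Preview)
Your proof is correct and follows the same approach as the paper: reduce via Lemma~\ref{lem:u.from.a} to constructing an atom sequence, then place one point (or small set) per nonempty codeword in $\R^d$; the paper does this in $\R^1$ and then invokes Lemma~\ref{lem:u.from.a}. Your handling of the $A_\emptyset$ convention and your direct verification that \eqref{eqn:a.from.u} recovers the intended atoms are, if anything, more careful than the paper's terse argument.
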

	\begin{proof}
		It is sufficient to prove this statement for $d = 1$. For each $\sigma \in \mathcal{C}\backslash\{\emptyset\}$, let $A_\sigma = \bigcup_{i \in \sigma}\{i\} \subset \R$, and let $A_\emptyset=\mathbb{R}\backslash\bigcup_{\sigma \in \mathcal{C}\backslash\{\emptyset\}}A_\sigma$. Then $\mathcal{C}$ is realized by the atom sequence $\mathcal{A}$. Define $\mathcal{U}$ via (\ref{eqn:u.from.a}). By Lemma \ref{lem:u.from.a}, $\mathcal{C} = \mathcal{C}(\mathcal{U})$. 
	\end{proof}
	
	\begin{definition}
		We say that two sets $A,B \subset \R^d$ are adjacent if $A \cap B = \emptyset$ and either $\overline{A} \cap B \neq \emptyset$ or $A \cap \overline{B} \neq \emptyset$, where $\overline{A}$ denotes the closure of $A$ in the Euclidean topology.
	\end{definition}
	\begin{definition}[The graph of a realization]\label{defn:graph.of.u}
		Let $\mathcal{C} = \mathcal{C}(\mathcal{U})$ be a connected code with realization $\mathcal{U}$.  Let $\mathcal{A}$ be the atoms defined via $\mathcal{U}$ in (\ref{eqn:a.from.u}). The graph of $\mathcal{U}$, denoted $\G(\mathcal{U})$, is a graph with one vertex for every connected component of each atom $A_\sigma$ with $\sigma\neq\emptyset$, and an edge for every pair of connected components of atoms that are adjacent.
	\end{definition}
	\begin{lemma}\label{lem:g.in.dprime}
		Let $\mathcal{C} = \mathcal{C}(\mathcal{U})$ be a connected code with realization $\mathcal{U}$ in dimension $d$. If $\G(\mathcal{U})$ can be embedded in $\R^{d'}$, then $\mathcal{C}$ can also be realized by connected sets in dimension $d'$. 
	\end{lemma}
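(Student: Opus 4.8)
The plan is to build, from the graph embedding of $\G(\mathcal{U})$ in $\R^{d'}$, a new realization $\mathcal{U}'$ of $\mathcal{C}$ in $\R^{d'}$ by connected sets, essentially by "thickening" the embedded graph. First I would fix an embedding $\iota\colon\G(\mathcal{U})\hookrightarrow\R^{d'}$; since $\G(\mathcal{U})$ is a finite graph, I may take $\iota$ to be a rectilinear (piecewise-linear) embedding, with vertices at generic points and edges realized as disjoint polygonal arcs meeting only at shared endpoints. For each vertex $v$ of $\G(\mathcal{U})$ — which corresponds to a connected component $C_v$ of some atom $A_{\sigma(v)}$ with $\sigma(v)\neq\emptyset$ — and for each edge $e$ of $\G(\mathcal{U})$, I would like to carve out small pairwise-almost-disjoint neighborhoods: an open (or closed, by the remark in the introduction on the equivalence of the two) ball $B_v$ around $\iota(v)$, and a thin tube $T_e$ around the arc $\iota(e)$, chosen so that $T_e$ meets $B_u$ and $B_v$ (its endpoints' balls) in small collars and is disjoint from every other $B_w$ and $T_{e'}$. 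Define the new atom $A'_\sigma$ to be the union of all the $B_v$ with $\sigma(v)=\sigma$, together with the "half-tubes": for each edge $e=\{u,v\}$ recording adjacency of $C_u\subseteq A_{\sigma(u)}$ and $C_v\subseteq A_{\sigma(v)}$, split $T_e$ into two halves, assigning the half near $u$ to $A'_{\sigma(u)}$ and the half near $v$ to $A'_{\sigma(v)}$, with the dividing interface an open region that belongs to the atom $A'_\emptyset$. Finally set $A'_\emptyset=\R^{d'}\setminus\bigcup_{\sigma\neq\emptyset}A'_\sigma$.

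The verification then proceeds in two parts. For nonemptiness: $A'_\sigma\neq\emptyset$ iff $A_\sigma$ has a connected component, iff $\sigma\in\mathcal{C}$, so $\mathcal{C}(\mathcal{A}')=\mathcal{C}$ as atom sequences; applying Lemma~\ref{lem:u.from.a} gives a realization $\mathcal{U}'$ of $\mathcal{C}$ in $\R^{d'}$ via $U'_i=\bigcup_{i\in\sigma}A'_\sigma$. For connectedness of each $U'_i$: the point of the construction is that the combinatorics of adjacencies was recorded exactly by $\G(\mathcal{U})$, so I would argue that the connected components of $U'_i$ are in bijection with the connected components of $U_i=\bigcup_{i\in\sigma}A_\sigma$. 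Concretely, two components $C_u\subseteq A_\sigma$, $C_v\subseteq A_\tau$ with $i\in\sigma\cap\tau$ lie in the same component of $U_i$ iff there is a path in $U_i$ between them, iff (unwinding what it means to move between atoms within $U_i$) there is a path in $\G(\mathcal{U})$ from $u$ to $v$ using only vertices whose labels contain $i$; and in $\mathcal{U}'$ that same $\G(\mathcal{U})$-path is realized geometrically by the chain of balls $B_{w}$ and connecting half-tubes (both halves of each traversed tube lie in $U'_i$ precisely because $i$ belongs to both endpoint labels). Since $\mathcal{U}$ is a realization of a connected code, each $U_i$ is connected, hence each $U'_i$ is connected, as desired. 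I should also check each $B_v$ and each $T_e$ is itself connected (immediate for balls and tubes), so that every $U'_i$ really is a union of overlapping connected pieces.

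The main obstacle I anticipate is making the thickening simultaneously coherent for all edges at a single vertex while preserving exactly the adjacency pattern — that is, ensuring no \emph{spurious} adjacencies or overlaps are introduced. Two issues need care: (i) when several edges of $\G(\mathcal{U})$ share a vertex $v$, their tubes must all attach to $B_v$ without colliding with one another away from $B_v$; this is handled by PL-transversality (generic arcs in $\R^{d'}$ with $d'\geq 2$ can be made to meet only at prescribed endpoints) combined with taking the radius of $B_v$ larger than the tube thickness so all incident half-tubes emanate from distinct patches of $\partial B_v$; and (ii) when $d'=1$ the construction degenerates — but in that regime $\G(\mathcal{U})$ embeds in $\R$ only if it is a disjoint union of paths, and the conclusion follows by directly laying out intervals on the line, so I would treat $d'=1$ as a (easy) separate base case and assume $d'\geq 2$ for the tube argument. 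A final technical point is verifying that the "interface" regions separating the two halves of each tube genuinely belong to $A'_\emptyset$, i.e. to no $U'_i$; this forces those interfaces to be open and to have nonempty complement-neighborhoods, which is why working with open sets (then invoking the open/closed equivalence from the introduction) is cleanest.
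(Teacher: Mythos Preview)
Your construction contains a genuine error in the tube-splitting step. You assign the two halves of $T_e$ to $A'_{\sigma(u)}$ and $A'_{\sigma(v)}$ and place the dividing interface in $A'_\emptyset$. But then, for any $i\in\sigma(u)\cap\sigma(v)$, the set $U'_i=\bigcup_{i\in\sigma}A'_\sigma$ contains both half-tubes yet \emph{not} the interface, since $i\notin\emptyset$. The two halves are therefore separated inside $U'_i$, and the ``chain of balls and connecting half-tubes'' you invoke is disconnected precisely at every interface. Your connectedness argument for $U'_i$ fails at exactly the step where you use it.

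The fix, which is what the paper does, exploits the asymmetry of edges in $\G(\mathcal{U})$: an adjacency between components of $A_\sigma$ and $A_\tau$ can occur only when $\sigma\subset\tau$ or $\tau\subset\sigma$ (this is the content of Proposition~\ref{prop:main}). So one assigns the \emph{entire} edge $e_{\sigma j,\tau k}$ to the atom with the smaller codeword, say $\sigma\subset\tau$, setting
\[
A'_\sigma=\bigcup_j\Bigl(v_{\sigma j}\cup\bigcup_{\tau k:\,\sigma\subset\tau}e_{\sigma j,\tau k}\Bigr).
\]
Now for $i\in\sigma\subset\tau$ the edge (in $A'_\sigma\subseteq U'_i$) runs all the way to the vertex $v_{\tau k}$ (in $A'_\tau\subseteq U'_i$), so the union is genuinely connected there. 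No thickening is needed; the paper works directly with the embedded graph as a one-dimensional subset of $\R^{d'}$, which also sidesteps your collision worries at high-valence vertices. Your instinct to separate the halves to keep atoms disjoint was the wrong move: disjointness is preserved by the asymmetric assignment, and the adjacency you need is preserved as well.
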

	\begin{proof}
		Take an embedding of $\G(\mathcal{U})$ in $\R^{d'}$. Let $\mathcal{A}$ be the atoms defined via $\mathcal{U}$ in (\ref{eqn:a.from.u}). Let $v_{\sigma j} \in \R^{d'}$ be the realization of the vertex of $\G(\mathcal{U})$ indexed by the $j$-th component of the atom $A_\sigma$. For each pair of nodes $v_{\sigma j}$ and $v_{\tau k}$, let $e_{\sigma j, \tau k} \subset \R^{d'}$ be the realization of the edge between these nodes. 
		If they are not connected, define $e_{\sigma j, \tau k} = \emptyset$. Now define atoms $\mathcal{A}'$ in $\R^{d'}$ via 
		\begin{equation*}
		A'_\sigma:=\bigcup_j\Biggl(v_{\sigma j}\cup\bigcup_{\tau k: \sigma\subset\tau}e_{\sigma j,\tau k}\Biggr) \subset \R^{d'},
		\end{equation*}for $\sigma\in\mathcal{C}\backslash\{\emptyset\}$, and
		\begin{equation*}
		A'_\emptyset:=\R^{d'}\backslash\bigcup_{\sigma\in\mathcal{C}\backslash \{\emptyset\}}A'_\sigma.
		\end{equation*}
		It is easy to check that $\mathcal{C} = \mathcal{C}(\mathcal{A}')$, so $\mathcal{C}$ is realizable in dimension $d'$, as needed.  
	\end{proof}
	
	\subsection{Proof of Proposition \ref{prop:main}}
	Let $\mathcal{C}$ be a code on $n$ neurons. By Lemma~\ref{lem:always.realizable}, $\mathcal{C} = \mathcal{C}(\mathcal{U}) = \mathcal{C}(\mathcal{A})$ for some $U_i, A_\sigma \subseteq \R^d, i \in [n], \sigma \subseteq [n]$. For each $i \in [n]$, $U_i$ is connected if and only if for every $\sigma,\tau \subseteq [n]$ such that $i\in\sigma\cap\tau$, from each connected component $C_\sigma$ of $A_\sigma$ to each connected component $C_\tau$ of $A_\tau$ there is a path 
	$C_\sigma = C_{\nu_1} \to  C_{\nu_2} \ldots \to  C_{\nu_{m-1}} \to C_{\nu_m} = C_\tau$ in $\G(\mathcal{C}(\mathcal{U}))$, where $C_{\nu_j} \subseteq A_{\nu_j}$ is a connected component of $A_{\nu_j}$, such that $A_{\nu_j}\subseteq U_i$ for each $j\in[m]$. Note that, in order to have the receptive fields either all open or all closed, two connected components $C_\sigma\subseteq A_\sigma$, $C_\tau\subseteq A_\tau$ are allowed to be adjacent if and only if either $\sigma\subseteq\tau$ or $\tau\subseteq\sigma$.
	Hence $U_i$ is allowed to be connected if and only if for every $\sigma,\tau$ such that $i\in\sigma\cap\tau$, there exists a sequence of distinct codewords $\nu_1,\dots,\nu_m\in\mathcal{C}$ such that:
	\begin{itemize}
		\item $\sigma=\nu_1$
		\item either $\nu_j\subseteq\nu_{j+1}$ or $\nu_{j+1}\subseteq\nu_{j}$, for every $j\in[m-1]$
		\item $\nu_m=\tau$
		\item $i\in\nu_j$ for each $j\in[m]$.
	\end{itemize}This proves the proposition. 
	\qed
	
	\subsection{Proof of Theorem \ref{thm:main}}
	We split the statement of Theorem \ref{thm:main} into two parts, and prove them separately. The first part, Proposition \ref{prop:d3} states that the minimal embedding dimension for a connected code is at most $3$. The second part, Proposition \ref{prop:d2} gives a characterization for connected codes with $d^\ast = 2$. For the case $d^\ast=1$, see \cite[Proposition 1.9 and Theorem 3.4]{rosen-zhang}.
	\begin{proposition}\label{prop:d3}
		Let $\mathcal{C}$ be a connected code on $n$ neurons. Then $\mathcal{C}$ is realizable by connected sets in dimension $3$.
	\end{proposition}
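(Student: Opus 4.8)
The plan is to start from an arbitrary realization of $\mathcal{C}$ by connected sets (which exists by the characterization in Proposition~\ref{prop:main}, or abstractly by working with the atom graph) and then to realize the associated graph $\G(\mathcal{U})$ in $\R^3$, invoking Lemma~\ref{lem:g.in.dprime}. Concretely, I would first produce \emph{some} connected realization $\mathcal{U}_0$ of $\mathcal{C}$ — for instance, because $\mathcal{C}$ is connected, Lemma~\ref{lem:always.realizable} gives an arbitrary realization, and one can then merge connected components of atoms along the sequences guaranteed by Proposition~\ref{prop:main} to make each $U_i$ connected. From $\mathcal{U}_0$ extract the graph $G = \G(\mathcal{U}_0)$: its vertices are connected components of nonempty atoms, and its edges record adjacency, which by the discussion in the proof of Proposition~\ref{prop:main} occurs only between components $C_\sigma, C_\tau$ with $\sigma \subseteq \tau$ or $\tau \subseteq \sigma$.

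The key step is then: \emph{every finite graph embeds in $\R^3$.} This is classical — place the vertices in general position on the moment curve $t \mapsto (t, t^2, t^3)$ (or any curve no four points of which are coplanar) and join adjacent vertices by straight segments; general position guarantees that two such segments meet only at a shared endpoint. Applying Lemma~\ref{lem:g.in.dprime} with $d' = 3$ to the embedding of $G$ immediately yields a realization of $\mathcal{C}$ by connected sets in $\R^3$, since the sets $A'_\sigma$ produced there are unions of vertex-points and incident edge-segments, hence connected (each component $v_{\sigma j}$ together with the edges $e_{\sigma j, \tau k}$ for $\sigma \subset \tau$ attaches to it, and the sequence condition of Proposition~\ref{prop:main} ensures the pieces of $U_i = \bigcup_{i \in \sigma} A'_\sigma$ chain together into one connected set).

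The only real subtlety — and the step I expect to need the most care — is checking that the graph $G$ built from $\mathcal{U}_0$ faithfully records the adjacency structure needed so that the sets $U'_i$ obtained from Lemma~\ref{lem:g.in.dprime} are genuinely connected, and that no \emph{spurious} adjacencies are introduced in $\R^3$: one must verify that distinct $A'_\sigma$ and $A'_\tau$ are adjacent in the new realization exactly when $G$ has a corresponding edge, which follows from the straight-segment edges being pairwise disjoint away from shared endpoints. A secondary point is that the construction must keep the sets either all open or all closed; taking the $A'_\sigma$ as the closed point-and-segment sets (and $A'_\emptyset$ the closure of its complement's interior, or simply passing to small open neighborhoods afterwards) handles this, using the fact recalled in the introduction that closed-connected realizability and open-connected realizability coincide. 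Modulo these bookkeeping checks, the argument is complete.
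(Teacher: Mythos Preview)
Your proof is correct and takes essentially the same approach as the paper: both hinge on the fact that every finite graph embeds in $\R^3$, and then build the atoms of a realization from that embedding. The paper's version is marginally more direct in that it works straight from the containment poset on $\mathcal{C}$ (one ball $B_\sigma$ per nonempty codeword, one disjoint tube $T_{\sigma,\tau}$ per proper inclusion $\sigma \subset \tau$, with $A_\sigma := B_\sigma \cup \bigcup_{\sigma \subset \tau} T_{\sigma,\tau}$) rather than first producing an auxiliary connected realization $\mathcal{U}_0$ and invoking Lemma~\ref{lem:g.in.dprime}; but this is just your argument specialized to a graph with a single vertex per codeword, so the underlying idea is the same.
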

	\begin{proof}
		For all $\sigma\in\mathcal{C}\backslash\{\emptyset\}$, choose disjoint balls $B_\sigma\subset\mathbb{R}^3$ and for all $\sigma,\tau\in\mathcal{C}$ such that $\sigma\subset\tau$, let $T_{\sigma,\tau}\subset\mathbb{R}^3$ be a tube that connects $B_\sigma$ and $B_\tau$. Since we are in $\mathbb{R}^3$, the tubes can always be arranged so that they do not intersect with each other and this can be proved by induction the number of tubes. Given $m$ disjoint tubes between $|\mathcal{C}\backslash\{\emptyset\}|$ balls, suppose we need to construct a tube $T_{\sigma,\tau}$ joining the balls $B_\sigma$ and $B_\tau$. Since the number $m$ of existing tubes is finite, we can pick a point $s \in B_\sigma$ and a point $t \in B_\tau$ such that their projections in the $(0,0,1)$ direction is larger than that of any other point on the $m$ existing tubes. Now join $s$ and $t$ by a tube $T_{\sigma,\tau}$ such that its projection onto the $(0,0,1)$ direction is larger than that of all other tubes. Thus, $T_{\sigma,\tau}$ is disjoint from the first $m$ tubes, completing the induction argument. Now, let
		\begin{equation*}A_\sigma:=B_\sigma \cup\bigcup_{\sigma\subset\tau}T_{\sigma,\tau}\end{equation*} for $\sigma\in\mathcal{C}\backslash\{\emptyset\}$ and let
		\begin{equation*}
		A_\emptyset:=\mathbb{R}^3\backslash\bigcup_{\sigma\in\mathcal{C}\backslash\{\emptyset\}}A_\sigma.
		\end{equation*}Then $\mathcal{C} = \mathcal{C}(\mathcal{A})$. Define $\mathcal{U}$ from $\mathcal{A}$ via (\ref{eqn:u.from.a}). By Lemma \ref{lem:u.from.a}, $\mathcal{C} = \mathcal{C}(\mathcal{U})$. By construction of $\mathcal{A}$ and since we are assuming that $\mathcal{C}$ satisfies Proposition \ref{prop:main}, the $U_i$'s are connected. This completes the proof.
	\end{proof}
	
	\begin{figure}[h]
		\begin{center}
			\includegraphics[width=13cm]{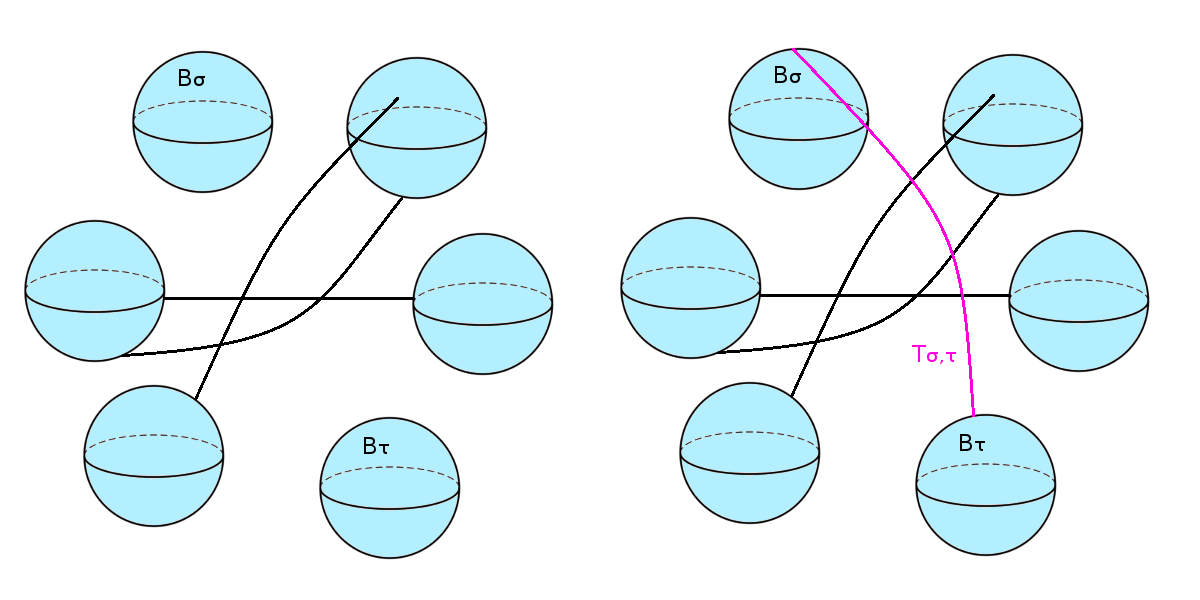}
		\end{center}
		\caption{Illustrative figure for the construction in Proposition \ref{prop:d3}. Given $m$ disjoint tubes between $|\mathcal{C}\backslash\{\emptyset\}|$ balls (picture on the left hand side), construct $T_{\sigma,\tau}$ such that its projection onto the $(0,0,1)$ direction is larger than that of all other tubes (right hand side).}
	\end{figure}
	
	\begin{proposition}\label{prop:d2}
		Let $\mathcal{C}$ be a connected code on $n$ neurons. Then $d^\ast(\mathcal{C}) = 2$ if and only if there exists a realization $\mathcal{C} = \mathcal{C}(\mathcal{U})$ by connected sets in $\R^3$ such that $\G(\mathcal{U})$ is planar.
	\end{proposition}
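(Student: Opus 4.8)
\textbf{Proof proposal for Proposition \ref{prop:d2}.}

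The plan is to prove the two directions separately, with the forward direction being essentially bookkeeping and the backward direction being the substantive part. For the ``only if'' direction, suppose $d^\ast(\mathcal{C}) = 2$. Then $\mathcal{C} = \mathcal{C}(\mathcal{U})$ for some realization by connected sets $\mathcal{U}$ in $\R^2$. I claim $\G(\mathcal{U})$ is planar: each vertex of $\G(\mathcal{U})$ corresponds to a connected component of an atom $A_\sigma \subseteq \R^2$, so pick a representative point in each such component, and for each adjacency (which, as noted in the proof of Proposition \ref{prop:main}, only occurs between components whose index sets are nested) draw a short arc from a boundary contact point into each of the two components. One must check these arcs can be chosen pairwise disjoint except at shared endpoints — this is where I would be slightly careful, routing each connecting arc through a small neighborhood of a chosen point on the common boundary of the two adjacent components, shrinking neighborhoods as needed — but since everything lives in $\R^2$ this yields a planar drawing of $\G(\mathcal{U})$, so $\G(\mathcal{U})$ is planar. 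Together with Proposition \ref{prop:d3}, which gives a realization in $\R^3$, one then notes that the realization just constructed already lives in $\R^2 \subset \R^3$, so in particular there is a realization in $\R^3$ whose graph is planar, as required. (Alternatively, invoke Lemma \ref{lem:g.in.dprime} in the contrapositive to connect the two dimensions, but the direct observation suffices.)

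For the ``if'' direction, suppose there is a realization $\mathcal{C} = \mathcal{C}(\mathcal{U})$ by connected sets in $\R^3$ with $\G(\mathcal{U})$ planar. Since $\G(\mathcal{U})$ is planar it embeds in $\R^2$, so by Lemma \ref{lem:g.in.dprime} the code $\mathcal{C}$ is realizable by connected sets in dimension $2$. Hence $d^\ast(\mathcal{C}) \leq 2$. To conclude $d^\ast(\mathcal{C}) = 2$ I must rule out $d^\ast(\mathcal{C}) = 1$: but the hypothesis of Proposition \ref{prop:d2} is exactly the second bullet of Theorem \ref{thm:main}, which is reached only in the ``Else'' case, i.e., when the sensor graph of $\mathcal{C}$ is not bipartite and hence $d^\ast(\mathcal{C}) \neq 1$ by the Rosen–Zhang characterization \cite{rosen-zhang}. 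Thus $d^\ast(\mathcal{C}) = 2$.

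I expect the main obstacle to be the careful verification in the ``only if'' direction that the natural vertex-and-arc assignment really produces a \emph{planar} drawing of $\G(\mathcal{U})$ — one must ensure that, when two components of atoms $A_\sigma$ and $A_\tau$ touch along a boundary set that may be topologically complicated, the connecting arcs for all incident edges can be pushed off one another. The key simplification is that adjacency forces nesting of the index sets, which constrains how atoms can meet and lets one localize each arc near a single well-chosen contact point; after that the argument is the standard fact that any graph drawn in the plane with arcs meeting only at endpoints is planar. The backward direction, by contrast, is a clean application of Lemma \ref{lem:g.in.dprime} plus the exclusion of $d^\ast = 1$ supplied by the structure of Theorem \ref{thm:main}.
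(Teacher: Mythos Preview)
Your approach matches the paper's: the forward direction observes that a realization in $\R^2$ already carries a planar drawing of $\G(\mathcal{U})$ (the paper asserts this in one line, ``by construction,'' without your arc-routing discussion) and then views $\R^2\subset\R^3$, while the backward direction is exactly an application of Lemma \ref{lem:g.in.dprime}. Note that the paper's own proof of the converse likewise only establishes realizability in dimension~$2$ and does not separately exclude $d^\ast=1$; your reading that this exclusion is supplied by the ``Else'' in Theorem \ref{thm:main} is the intended one.
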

	
	\begin{proof}
		Suppose $d^\ast(\mathcal{C}) = 2$. Then there exists a realization $\mathcal{C} = \mathcal{C}(\mathcal{U})$ with $\mathcal{U}$ a collection of connected sets $\mathcal{U}$ in $\R^2$. The graph of $\mathcal{U}$, $\G(\mathcal{U})$, is by construction also embedded in $\R^2$. One can trivially embed a realization in $\R^2$ into $\R^3$ without changing the graph $\G(\mathcal{U})$, so we are done. Conversely, suppose that $\mathcal{C} = \mathcal{C}(\mathcal{U})$ for some $\mathcal{U}$ in $\R^3$ such that $\G(\mathcal{U})$ is planar. By Lemma \ref{lem:g.in.dprime}, $\mathcal{C}$ is realizable in dimension~$2$. 
	\end{proof} 
	
	We conclude our paper with two examples.
	
	\begin{example}[Connected code with $d^\ast = 3$]\label{ex:d3}
		Consider the following code 
		\begin{equation}\label{codedim2}
		\mathcal{C}=\{\emptyset,1,2,3,4,5,12,13,14,15,23,24,25,34,35,45\}.
		\end{equation} 
		This satisfies Proposition \ref{prop:main}, so $\mathcal{C}$ is a connected code. It's easy to see that every graph $\G(\mathcal{U})$ associated to this code must be a subdivision of the graph in Figure \ref{fig:ex.d3}, i.e. if $\mathcal{C}=\mathcal{C}(\mathcal{U})$, then $\G(\mathcal{U})$ must be either the graph in in Figure \ref{fig:ex.d3} or it can be obtained from it by subdividing some of its edges into two new edges, which must be connected to a new vertex. This is due to the fact that $\mathcal{C}$ is the code that contains exactly every $i\in[5]$ and every pair $ij$ of distinct $i,j\in[5]$. This implies, by Kuratowski's Theorem \cite{Kur}, that every graph associated to $\mathcal{C}$ is not planar. By Theorem \ref{thm:main}, $\mathcal{C}$ has minimal embedding dimension 3.
	\end{example}
	\begin{figure}[h]
		\begin{center}
			\includegraphics[width=5cm]{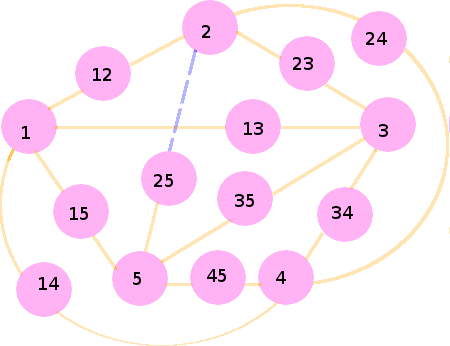}\end{center}
		\caption{The graph of a connected realization of a code $\mathcal{C}$ with $d^\ast(\mathcal{C}) = 3$ in Example \ref{ex:d3}.}\label{fig:ex.d3}
	\end{figure}
	
	\begin{example}[Connected code with $d^\ast = 2$]\label{ex:d2}
		Let $\mathcal{C}=\{\emptyset,1, 2, 3, 12, 123\}$ be a code on $3$ neurons. By Proposition \ref{prop:main}, this code is connected. Figure \ref{fig:d2} shows its realization by connected sets in $\R^2$, and the corresponding graph. We claim that the minimal embedding dimension of this code is $2$. One could verify by computing the sensor graph of $\mathcal{C}$. Alternatively, note that for the code to be realizable by connected sets, we must have $U_1\cap U_2 \cap U_3\neq\emptyset$ and $U_i$ can not be contained in $U_j$ for every $i,j\in[3]$, $i\neq j$. This is clearly not possible in dimension~$1$.
	\end{example}
	\begin{figure}[h]
		\begin{center}
			\includegraphics[width=12cm]{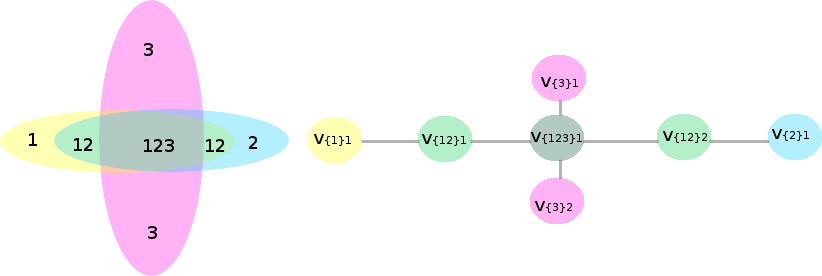}\end{center}
		\caption{The realization of a connected code $\mathcal{C}$ with $d^\ast(\mathcal{C}) = 2$ in Example \ref{ex:d2} and its graph.}\label{fig:d2}
	\end{figure}

\end{document}